\crefname{theorem}{Theorem}{Theorems}
\crefname{lem}{Lemma}{Lemmas}
\crefname{proposition}{Proposition}{Propositions}
\crefname{definition}{Definition}{Definitions}
\crefname{cor}{Corollary}{Corollaries}
\crefname{remark}{Remark}{Remarks}
\newcommand{\taw}{\tau_{\scriptstyle T,B}}
\newcommand{\tawt}{\tau_{ T}}
\newcommand{\rest}{\upharpoonright}
\newcommand{\nkt}{  \n_{\mathfrak{K}} ^2 }
\newcommand{\nk}{\n_{\mathfrak{K}}}
\newcommand{\h}{\mathfrak{H}}
\newcommand{\kb}{\mathfrak{K}_B}
\newcommand{\kbt}{\mathfrak{K}_{T}}
\newcommand{\sK}{\mathfrak{K}}
\newcommand{\la}{\langle}
\newcommand{\ra}{\rangle}
\newcommand{\ikb}{\ra_{\mathfrak{K}_B}}
\newcommand{\ik}{)_{\mathfrak{K}}}
\newcommand{\n}{\|}
\newcommand{\ifaf}{\Leftrightarrow}
\newcommand{\ran}{\mathrm{ran}\,}
\newcommand{\ranbar}{\overline{\mathrm{ran}}\,}
\newcommand{\ranbarT}{\mathrm{\overline{ran}}\, T}
\newcommand{\cran}{\overline{\mathrm{ran}}\,}
\newcommand{\dom}{\mathrm{dom}\,}
\newcommand{\D}{\mathrm{dom}\,}
\newcommand{\DT}{\mathrm{dom}\,T }
\newcommand{\DB}{\mathrm{dom}\,B }
\newcommand{\adj}{^*}
\newcommand{\tstar}{T^{*}}
\newcommand{\half}{^{\frac{1}{2}}}
\newcommand{\halfbt}{( B^*T)^{\frac{1}{2}}}
\newcommand{\N}{\mathbb{N}}%Seppo added
\newcommand{\inclu}{\subseteq}
\newcommand{\converge}{\underset{n \to + \infty} \longrightarrow}
\newcommand{\wt}{\widetilde}
\newcommand{\bh}{\mathbf{B}(\mathfrak{H})}
\newcommand{\lplus}{\bm{B^{+}}(\mathfrak{H})}
\newcommand{\lplusk}{\bm{B^{+}}(\mathfrak{K})}
\newcommand{\mathscr}{\mathbf}%Seppo renamed \mathscr to \mathbf
\newcommand{\bc}{\begin{center}}
\newcommand{\ec}{\end{center}}
\newtheorem{Prop}{Proposition}[section]
\newtheorem{Theo}{Theorem}[section]
\newtheorem{I.I}{prem}
\newtheorem{rem}{Remark}[section]
\newtheorem{lem}{Lemma}[section]
\newtheorem{cor}{Corollary}[section]
\numberwithin{equation}{section}
\numberwithin{equation}{section}
\renewenvironment{proof}[1][\proofname]{%
   \par\pushQED{\qed}\normalfont%
   \topsep6\p@\@plus6\p@\relax
   \trivlist\item[\hskip\labelsep\bfseries#1\@addpunct{.}]%
   \ignorespaces
}{%
   \popQED\endtrivlist\@endpefalse
}
\begin{document}

\title[Extension of the theorems of Douglas and Sebestyén]
    {An extension and refinement of the \\ theorems of Douglas and Sebestyén\\ for unbounded operators}

\author{Yosra~Barkaoui}
\address{Department of Mathematics and Statistics \\
University of Vaasa \\
P.O. Box 700, 65101 Vaasa \\
Finland}
\email{yosra.barkaoui@uwasa.fi}

\author{Seppo~Hassi}

\address{Department of Mathematics and Statistics \\
University of Vaasa \\
P.O. Box 700, 65101 Vaasa \\
Finland}
\email{sha@uwasa.fi}

\keywords{Sesquilinear form, nonnegative operator, ordering of forms, operator inequalities, 
factorization of operators}

\subjclass{47A07, 47A62, 47A63, 47B02, 47B25}

 %%%%%%%%%%%%%%%%%%%%%%%%%%%%%%%%%%%%%%%%%%%%%ù

\begin{abstract}
  For a closed densely defined operator $T $ from a Hilbert space $\h$ to a Hilbert space $\sK,$ necessary and sufficient conditions are established for the factorization of $T$ with a bounded nonnegative operator $X$ on $\sK.$
  This results yields a new extension and a refinement of a well-known theorem of R.G. Douglas, which shows that the operator inequality $A^*A\leq \lambda^2 B^*B, \lambda \geq 0$, is equivalent to the factorization $A=CB$ with $\|C\|\leq \lambda$. The main results give necessary and sufficient conditions for the existence of an intermediate selfadjoint operator $H\geq 0$, such that $A^*A \leq \lambda H \leq  \lambda^2 B^*B$. The key results are proved by first  extending a theorem of Z. Sebesty\'en to the setting of unbounded operators.
\end{abstract}

\maketitle
%\tableofcontents
%

\section{Introduction}
Let  $(\h, (.,. )_{\h})$ be a Hilbert space and denote by $\bh$  the class of bounded everywhere defined operators on $\h.$ For $T,B \in \bh,$   R.G.~Douglas \cite[Theorem~1]{Douglas1966} showed, in 1966,  that the following equivalence holds for some $\lambda \geq 0:$
%____________
\begin{equation}\label{intro1}
  TT^* \leq \lambda^2 \, BB^* \quad \ifaf \quad T= BC , \ C \in \bh \quad  \ifaf \quad \ran T \inclu \ran B.
\end{equation}
Later, in 1983, Z.~Sebestyén \cite{sebestyen1983restrictions} established the following characterization for a related problem:
\begin{equation}\label{intro3}
  T^*T \leq \lambda\, T^*B  \textrm{ for some } \, \lambda \geq 0 \quad \ifaf \quad T= XB , \ X \in \lplus,
\end{equation}
where $T,B \in \bh$ and $\lplus$ stands for the class of bounded nonnegative operators on $\h.$
In fact, the inequality in \eqref{intro3} is closely connected to the one in \eqref{intro1},
since the identity $T=XB$ implies the following two inequalities:
\begin{equation}\label{intro2}
  \tstar T \leq \lambda \tstar B \leq   \lambda^2 B\adj B.
\end{equation}
Therefore, the existence of a product presentation $T=XB$, where the factor $X$ is not only bounded,
but also \textsl{nonnegative} involves an intermediate nonnegative selfadjoint operator $\lambda T^*B$
lying between $T^*T$ and $\lambda^2 B^*B$ in the theorem of Douglas.

      The study of such factorizations has since been extended to more general settings. For instance, in 2013,   D.~Popovici and Z.~Sebestyén \cite[Theorem~2.2]{popovici2013factorizations}  generalized  the second equivalence in  \eqref{intro1} to multivalued linear operators (linear relations) and showed that $T \inclu BC $ for some liner relation $ C $ if and only if $ \ran T \inclu \ran B.$
      On the other hand, the first equivalence in \eqref{intro1} was established by S.~Hassi and H.S.V.~de~Snoo \cite{hassi22014} for both  unbounded linear operators and linear relations in 2015.
      As to \eqref{intro3}, its extension has been recently studied by the present authors in the context of closed unbounded densely defined  operators $T,B$ from the Hilbert space $\h$  to another Hilbert space $\sK$ with domains $\DT$ and $\DB.$
      More precisely, it is shown in \cite[Theorem~2.7]{papertwo} that
        %-----------------------
        \begin{equation}\label{coreeq}
           \tstar T \leq \lambda \, \tstar B \quad \ifaf \quad X\overline{B_0} \inclu T, \ X \in \lplus,
        \end{equation}
        where $B_0:= B \rest \D ( \tstar B)$ and $\tstar B\geq 0$ is selfadjoint. Moreover, when in addition, $B$ is nonnegative and selfadjoint, also the factorization $T=XB$ is characterized in \cite{papertwo} by means of quasi-affinity to a nonnegative selfadjoint operator. Furthermore, such a factorization is shown to imply several local spectral properties for $T$ studied further in \cite{paper3}.
        In the case of bounded operators Sebestyén's result in \eqref{intro3} has also  been studied in recent papers by M.~L. Arias, G.~Corach, and M.~C. Gonzalez \cite{gustavo2013products} and by M.~Contino, M.~A. Dritschel, A.~Maestripieri, and  S.~Marcantognini \cite{product2021}. In the last paper also some local spectral theoretic results for $T=XB\in \bh$ have been established.

        In this paper, we improve \eqref{coreeq} and establish a complete unbounded analog of \eqref{intro3} without the core condition $B=\overline{B_0}$ and the selfadjointness assumption on $T^*B$. Inspired by the work of Sebestyén, our approach hinges on the construction of an auxiliary Hilbert space
        by means of a sesquilinear form
        $\tau_{\scriptstyle T,B}[f,g] := ( Tf, Bg \ik, $ $ f,g \in \D \taw = \DB \inclu \DT$.
        In the first step, the following equivalences are proved in \cref{theo1}:
           %--------------------
             \begin{equation}\label{eqtaw}
             X B \inclu T \ifaf \tawt \leq \lambda \, \taw \  \ifaf \n T f \nkt \leq \, {\lambda}  ( Tf, Bf \ik ,
             \end{equation}
            %------------------
       where the form $\tawt$ is defined by $\tawt[f]=(Tf,Tf)_\sK$. An important further result is that the form $\taw$ is closable and, therefore, its closure gives rise to a nonnegative selfadjoint operator $H$, and the inequality in \eqref{eqtaw} can be described explicitly by means of $H$, $X$ and $B$ as follows:
       \begin{equation}\label{eqH}
           \tstar T \leq \, \lambda H, \quad H=  B\adj X\half \overline{X\half B};
       \end{equation}
    see \cref{theo2}. An essential difference here is that in \eqref{coreeq} the operator $T^*B$ is assumed to be selfadjoint,
    while \eqref{eqH} shows that $T^*B$ is in general just a symmetric restriction of $H=H^*\geq 0$. Moreover, here $\dom B$ is
    a core for the form $\overline{\taw}$ generating the operator $H$.
    A further study of the inequality $\tstar T \leq \, \lambda H$, where $H$ is only assumed to be a selfadjoint operator
    (i.e. without the specific formula for $H$ in \eqref{eqH}), is carried out and yields further equivalent conditions
    for \eqref{eqtaw} in \cref{prop3}, for instance:
    %-------------
     \begin{align*}
       XB \inclu T \ifaf \, \tstar T \leq \, \lambda H, \text{ where } H\inclu B\adj T \text{ and  } \DB \inclu \D H\half.
    \end{align*}
This not only completes the extension of Sebestyén's result to the present unbounded framework, but the above mentioned results
motivate the investigation of the   \emph{reversed}  version of the above inequalities to be studied in \cref{section3}.
Namely, also in the present case of unbounded operators, the inequality $T^*T \leq \lambda H$ implies the inequality $H\leq \lambda B^*B$;
see \cref{cor21}.
This second inequality will be characterized in \cref{theo3} with some further results, completing the study of the inequalities
\eqref{intro1}--\eqref{intro2} in the case of unbounded operators.

\section{An extension of Sebesty\'en's theorem for unbounded operators} \label{section2}

The main purpose  of this article is in fact to solve the above problem and present analogue  characterizations for the factorization of $T$ as in  \eqref{intro3}. Our first approach is inspired by Sebestyén theorem \cite{sebestyen1983restrictions},   which we now extend to the unbounded  case.

\begin{Theo}\label{theo1}
   Let $(\h, (\cdot,\cdot)_{\h}),  (\mathfrak{K}, (\cdot,\cdot)_{\mathfrak{K}})$ be two complex Hilbert spaces and let $T,B:H \rightarrow K$
   be closed densely defined operators. Then the following statements are equivalent for some $\lambda \geq 0:$
   \begin{enumerate}
    \item[\rm{(1)}] $\n T f \nkt \leq \, \lambda \,   ( Tf, Bf \ik$  for all $f \in \DB \inclu \DT ;$
    \item[\rm{(2)}] there exists $X \in \lplusk$ such   that $\n X \nk \leq \lambda $ and  $XB \inclu T$.
  \end{enumerate}
In this case, $X$ can be selected such that $\ran X \inclu \ranbar T$.
%in which case $X$ is uniquely determined.
\end{Theo}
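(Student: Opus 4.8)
The plan is to follow the classical Sebestyén route via an auxiliary Hilbert space built from the form $\taw$. Assume (1). First I would introduce on $\DB$ the sesquilinear form $\taw[f,g] = (Tf, Bg)_{\sK}$ together with the seminorm coming from $\tawt[f] = \n Tf \nkt$. Condition (1) says precisely that $\tawt[f] \leq \lambda\,\taw[f]$ for all $f\in\DB$; in particular $\taw[f]\geq 0$, so $\taw$ is a nonnegative form, and by Cauchy--Schwarz for $\taw$ together with (1) one gets $\n Tf \nkt \leq \lambda\,\taw[f]^{1/2}\n Tf\nk$, hence $\n Tf\nk \leq \sqrt{\lambda}\,\taw[f]^{1/2}$ and therefore $\taw[f]\leq \lambda\,\taw[f]$... more usefully, the kernel of the seminorm $\taw[\cdot]^{1/2}$ is contained in $\ker T$. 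The key quantitative consequence is the ``sandwich'' $\n Tf\nk^2 \leq \lambda\,\taw[f,f]$.

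Next I would pass to the quotient/completion: let $N=\{f\in\DB:\taw[f]=0\}$, form the pre-Hilbert space $\DB/N$ with inner product induced by $\taw$, and let $\mathfrak{K}_{\taw}$ be its completion. The map $f\mapsto Tf$ factors through $\DB/N$ (since $N\subseteq\ker T$) and, by the inequality $\n Tf\nk^2\leq\lambda\,\taw[f]$, extends to a bounded operator $V:\mathfrak{K}_{\taw}\to\sK$ with $\n V\n^2\leq\lambda$. On the other hand, the embedding $f\mapsto [f]$ from $\DB$ (with $\sK$-norm inherited via $Bf$? no —) needs care: the natural bounded map is $J:\mathfrak{K}_{\taw}\to\sK$, well-defined on the dense subspace by $[f]\mapsto Bf$? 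This is where the hypothesis must be used again: from $|\taw[f,g]| = |(Tf,Bg)_\sK| \le \n Tf\nk\,\n Bg\nk$ and $\n Tf\nk^2\le\lambda\taw[f]$ one does \emph{not} immediately bound $\taw[f]$ by $\n Bf\nk$. Instead I would use the other Cauchy--Schwarz direction: $\taw[f] = (Tf,Bf)_\sK \le \n Tf\nk\,\n Bf\nk \le \sqrt{\lambda}\,\taw[f]^{1/2}\,\n Bf\nk$, giving $\taw[f]^{1/2}\le\sqrt{\lambda}\,\n Bf\nk$, i.e. $\taw[f]\le\lambda\,\n Bf\nk^2$. Hence $B$ too factors through the quotient and extends to a bounded $W:\sK\to\mathfrak{K}_{\taw}$-type map; more precisely the map $Bf\mapsto[f]$ is bounded from $\ran B$ into $\mathfrak{K}_{\taw}$, with norm squared $\le\lambda$, and extends by continuity (and by $0$ on $(\ranbar B)^\perp$) to a bounded operator $W:\sK\to\mathfrak{K}_{\taw}$.

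Now set $X := V W^{*}$... no: the composition that lands in $\mathbf{B}(\sK)$ and is nonnegative should be $X := V\,$(something$)^*$ with $V$ again. The cleanest choice: observe $W^{*}:\mathfrak{K}_{\taw}\to\sK$ satisfies, for $u=[g]$, $(W^{*}[g],Bf)_\sK = ([g],[f])_{\taw} = \taw[f,g]$? — adjusting, take $X:=$ the bounded nonnegative operator on $\sK$ determined by the quadratic form $u\mapsto \n (\text{something})\n$; concretely I would define $X$ so that $(Xh,k)_\sK$ agrees with $([h']_{\taw},[k']_{\taw})$ through the above identifications, check $X\ge 0$ and $\n X\n\le\lambda$ from the two norm bounds, and then verify $XBf = Tf$ for $f\in\DB$ by unwinding: $(XBf, k)_\sK = ([f]_{\taw}, Wk)_{\taw}$, and using that $V$ restricted to $[\DB]$ is $f\mapsto Tf$ together with $V^{*}k$ pairing correctly, this equals $(Tf,k)_\sK$. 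That gives $XB\subseteq T$. The converse direction (2)$\Rightarrow$(1) is the easy computation $\n Tf\nk^2 = (XBf,Tf)_\sK$? — rather $(Tf,Bf)_\sK=(XBf,Bf)_\sK=(X^{1/2}Bf,X^{1/2}Bf)\ge \n X\n^{-1}\n XBf\n^2=\n X\n^{-1}\n Tf\n^2$, hence $\n Tf\nk^2\le\n X\n\,(Tf,Bf)_\sK\le\lambda(Tf,Bf)_\sK$.

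Finally, for the last assertion: by construction $\ran X\subseteq\ran V\subseteq\ranbar T$ since $V$ is the closure of $f\mapsto Tf$ on a dense set, so its range lies in $\ranbar T$; if the naive construction gives a larger range, replace $X$ by $P\,X\,P$ where $P$ is the orthogonal projection onto $\ranbar T$ — one checks $PXP$ is still nonnegative with norm $\le\lambda$, still satisfies $PXP\,Bf = P\,Tf = Tf$ because $\ran B$... wait, that needs $PBf=Bf$, which is false in general. The correct fix is the one already built in: choose $X$ from the $V$-side so that $\ran X\subseteq\ranbar T$ automatically, which the completion construction delivers. \textbf{The main obstacle} I anticipate is precisely this bookkeeping: making the two bounded extensions $V$ (out of $\mathfrak{K}_{\taw}$) and $W$ (into $\mathfrak{K}_{\taw}$) interact so that $X:=$ their composition is (a) well-defined as an \emph{everywhere-defined bounded nonnegative} operator on $\sK$, (b) has norm $\le\lambda$, (c) satisfies the \emph{inclusion} $XB\subseteq T$ (not just $XB=T$ on a core, but genuine graph inclusion, which here is automatic since $X$ is everywhere defined and bounded), and (d) has range in $\ranbar T$ — all four simultaneously, using each of the two Cauchy--Schwarz estimates above exactly once. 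The closability of $\taw$ alluded to in the introduction (Theorem~\ref{theo2}) is not needed for this theorem, only the completion, so I would keep the argument self-contained at the Hilbert-space-completion level.
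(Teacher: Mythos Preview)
Your overall framework is exactly the paper's --- the Sebesty\'en completion of $\DB$ with respect to the nonnegative form $\taw[f,g]=(Tf,Bg)_{\sK}$, and the bounded map $V:[f]\mapsto Tf$ from the completion $\mathfrak{K}_{\taw}$ into $\sK$ with $\|V\|^2\leq\lambda$ --- but the ``main obstacle'' you flag is not an obstacle at all, and your attempt to resolve it by building a \emph{second} bounded map $W:Bf\mapsto[f]$ (via the auxiliary estimate $\taw[f]\leq\lambda\|Bf\|_{\sK}^2$) and then composing is where the proposal goes off track. A composition like $VW$ lands in $\mathbf{B}(\sK)$ but there is no reason it should be \emph{nonnegative}; you sense this (``$X:=VW^{*}$\dots no'') and then never actually write down a concrete $X$ and verify all four properties.

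The paper's resolution is a one-line computation you almost stumble onto: forget $W$ entirely and just evaluate the adjoint $V^{*}:\sK\to\mathfrak{K}_{\taw}$ on vectors $Bf$. For any $g\in\DB$,
\[
  \langle [g],\,V^{*}Bf\rangle_{\taw}
   =(V[g],Bf)_{\sK}
   =(Tg,Bf)_{\sK}
   =\langle [g],[f]\rangle_{\taw},
\]
so $V^{*}Bf=[f]$. Now $X:=VV^{*}\in\lplusk$ is automatically nonnegative with $\|X\|\leq\|V\|^2\leq\lambda$, satisfies $XBf=VV^{*}Bf=V[f]=Tf$ for all $f\in\DB$, and $\ran X\subseteq\ran V\subseteq\ranbarT$ by construction. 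In other words, your $W$ \emph{is} $V^{*}$ restricted to $\ran B$, and once you recognize this the nonnegativity of $X$ and the range inclusion come for free --- no second Cauchy--Schwarz estimate, no extension by zero, no bookkeeping. Your argument for $(2)\Rightarrow(1)$ is correct and matches the paper's.
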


\begin{proof} Assume that (1) holds. Then $(T f, Bg \ik$, $f,g \in \D B$,
defines a nonnegative sesquilinear form in the Hilbert space $\h$.
Observe, that
  \begin{equation}\label{innerpro2}
    ( Tf,Bf \ik=0 \quad \ifaf \quad Tf =0, \quad f \in \D B.
  \end{equation}
By completing the quotient space $[\D B/(\ker T\cap \D B)]$ one obtains a Hilbert space $\kb$
whose inner product is denoted by $\la  \widetilde{f},\widetilde{g} \ikb$, $\widetilde{f},\widetilde{g}\in \kb$,
such that
  \begin{equation}\label{innerpro}
 \langle  \wt{f},\wt{g} \rangle_{\kb} = (T f, Bg)_\sK , \quad f,g \in \D B.
  \end{equation}

Now let $V: \kb \rightarrow \mathfrak{K}$ be defined by
   $$
   V \widetilde{f} = Tf \quad \text{for all } f \in \DB.
   $$
Then $V$ is a well-defined linear operator by \eqref{innerpro2} and \eqref{innerpro},
and it follows from  (1) that it is  bounded by $\sqrt{\lambda }.$
It is claimed that
   $$
   V^*  Bf = \widetilde{f}  \quad \text{for all } f \in \DB.
   $$
   To see this, let $f,g \in \DB$ and  $\widetilde{g} \in \kb.$  Then,
   $$
   \la   \widetilde{g}  , V^*  B f \, \ikb =  ( V \widetilde{g}, B f    )_{\mathfrak{K}}  = ( Tg, B f )_{\mathfrak{K}} = \la \widetilde{g}, \widetilde{f} \, \ikb,
   $$
   and therefore $ V^*   Bf = \widetilde{f},$ as claimed. Consequently,   $X:=     VV^* \in \lplusk$ and one has  $\n X \nk \leq \lambda  ,$ $\ranbar X \inclu \ranbarT,$ by construction,  and
   $$
  X Bf =   V\widetilde{f} =  Tf \quad \text{for all } f \in \DB.
   $$
  This proves that $X B   \inclu T.$  \\
   %___________________________________________
   %____________________________________________
\hspace*{0.3cm} For the converse,  assume (2) and let $f \in \DB \inclu \DT.$ Then,
\[
    \|T f\|^2 =\|XBf\|^2 \leq \, \|X^{\half}\|^2 \|X^{\half} Bf\|^2 \leq \lambda (Tf,Bf\ik,
\]
which completes the proof of (1).
\end{proof}

Notice  that a  combination of \eqref{innerpro2} and item (1) of \cref{theo1} shows   that if $T \neq 0$ then   $\lambda  > 0.$   Hence,  the form
\begin{equation}\label{from}
  \tau_{\scriptstyle T,B}[f,g] := ( Tf, Bg \ik, \quad f,g \in \DB \inclu \DT,
\end{equation}
is nonnegative. The next theorem shows that $\tau_{T,B}$ is closable and identifies its closure.

\begin{Theo}  \label{theo2}
 Let $T,B:\mathfrak{H} \rightarrow \mathfrak{K}$ be  closed densely defined linear operators such that condition (1) or, equivalently, (2) of \cref{theo1} holds. Then, the form in \eqref{from} is closable and its closure is given by
     \begin{equation}\label{frombar}
    \overline{\tau_{\scriptstyle T,B}}[f,g] := ( \overline{X\half B} f, \overline{X\half B}  g \ik,
                                             = (H\half f , H\half g \ik
    \quad f,g \in \D \overline{\taw},
\end{equation}
where $H= B\adj X\half \overline{X\half B}= H\adj \geq 0$ and $X$ is as in \cref{theo1}.  Here $H$ is the unique representing selfadjoint nonnegative operator of the form $\overline{\taw}$ with $\D \overline{\taw}= \D H\half \inclu \DT.$

Furthermore, with $\tawt[f,g]:= (Tf, T g \ik,$ $f,g \in \DT,$ each of the following statements is equivalent to the items (1) and (2) of \cref{theo1}:
\begin{enumerate}[\rm(1)]
\item $\tawt \leq \lambda  \taw$ for some $\lambda  \geq 0;$
 \item $\tawt \leq \lambda  \overline{\taw}$ for some $\lambda  \geq 0;$
   \item $
    \tstar T \leq \lambda \,    H$ for some $ \lambda \geq 0.$
\end{enumerate}
\end{Theo}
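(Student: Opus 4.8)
The plan is to first establish closability of $\taw$ by exhibiting a concrete closed form that extends it, namely the form $f,g\mapsto(\overline{X\half B}f,\overline{X\half B}g)_\sK$ generated by the (closed) operator $\overline{X\half B}$; then to identify the associated selfadjoint nonnegative operator as $H=B\adj X\half\overline{X\half B}$ via the polar-decomposition/first-representation-theorem machinery; and finally to prove the chain of equivalences (1)$\Leftrightarrow$(2)$\Leftrightarrow$(3) together with the items of \cref{theo1}. First I would fix $X\in\lplusk$ with $XB\inclu T$ as provided by \cref{theo1}, so that $X\half$ is a bounded nonnegative operator and $X\half B$ is a densely defined operator (on $\DB$) which is closable because $X\half$ is bounded and $B$ is closed — more precisely $\overline{X\half B}\inclu X\half\overline{B}=X\half B$ when $B$ is closed; I should be slightly careful here, since $X\half B$ need not itself be closed, but it is certainly closable with closure contained in the closed operator $X\half B$. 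Since for $f,g\in\DB$ we have $(Tf,Bg)_\sK=(XBf,Bg)_\sK=(X\half Bf,X\half Bg)_\sK$, the form $\taw$ agrees on $\DB$ with the closable form attached to $X\half B$, hence $\taw$ is closable and its closure is the form in \eqref{frombar} with domain $\D\overline{X\half B}$.

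Next I would identify $H$. The form $\overline{\taw}[f,g]=(\overline{X\half B}f,\overline{X\half B}g)_\sK$ is the form of the operator $(\overline{X\half B})\adj\,\overline{X\half B}$, which by the first representation theorem is selfadjoint and nonnegative; writing $(\overline{X\half B})\adj=B\adj X\half$ (using that $X\half$ is bounded, so $(X\half B)\adj=B\adj X\half$ and this is already closed) gives $H=B\adj X\half\overline{X\half B}$, and simultaneously $\D\overline{\taw}=\D H\half=\D\overline{X\half B}$ with $\overline{\taw}[f,g]=(H\half f,H\half g)_\sK$. The inclusion $\D H\half=\D\overline{X\half B}\inclu\DT$ follows from the estimate $\|Tf\|^2\le\lambda(Tf,Bf)_\sK=\lambda\,\taw[f]$ of \cref{theo1}(1): this inequality says precisely $\tawt\le\lambda\,\taw$ on $\DB$, and since $\tawt$ is a closed form (being generated by the closed operator $T$) with $\DB$ a core for $\taw$, it extends to $\tawt\le\lambda\,\overline{\taw}$, forcing $\D\overline{\taw}\inclu\DT$. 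The uniqueness of the representing operator is the standard first-representation-theorem uniqueness.

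For the equivalences, the implications (1)$\Rightarrow$(2) is immediate since $\overline{\taw}$ extends $\taw$; (2)$\Rightarrow$(3) follows because $\tawt\le\lambda\,\overline{\taw}$ means $\|Tf\|^2\le\lambda\|H\half f\|^2$ for all $f\in\D\overline{\taw}=\D H\half$, and by a standard comparison argument (e.g. Douglas's lemma applied in the form-sense, or the fact that $\tawt$ closed and dominated by the closed form of $H$ on a common core yields domination everywhere) this is exactly $\tstar T\le\lambda H$ in the form ordering. The implication (3)$\Rightarrow$ \cref{theo1}(1) is the heart of the cycle: from $\tstar T\le\lambda H$ one restricts to $f\in\DB$, where $H\half f$ is defined and $\|H\half f\|^2=\overline{\taw}[f]=\taw[f]=(Tf,Bf)_\sK$, giving back $\|Tf\|^2\le\lambda(Tf,Bf)_\sK$, i.e. (1) of \cref{theo1}; and (1) of \cref{theo1} $\Rightarrow$ (1) of this theorem is just the restatement $\|Tf\|^2\le\lambda\,\taw[f]$ on $\DB$. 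The main obstacle I anticipate is the careful bookkeeping around \emph{cores} and domains of form-closures: one must verify that $\DB$ is genuinely a core for $\overline{\taw}$ (true by construction of the closure), that passing from the pointwise inequality on $\DB$ to the inequality between the \emph{closed} forms $\tawt$ and $\overline{\taw}$ is legitimate (this uses that $\tawt$ is closed and that a closed form dominating a closable form on its core dominates its closure — a routine but easy-to-misstate fact), and that the adjoint computation $(\overline{X\half B})\adj=B\adj X\half$ holds exactly, which relies on $X\half$ being bounded and everywhere defined so that no domain subtleties arise. Once these form-theoretic fundamentals are pinned down, all the displayed identities and the equivalences follow by direct substitution.
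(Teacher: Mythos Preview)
Your overall strategy coincides with the paper's: rewrite $\taw[f,g]=(X\half Bf,X\half Bg)_{\sK}$ using $XB\inclu T$, prove closability of $X\half B$, identify $H=(X\half B)^*\,\overline{X\half B}$ via the first representation theorem, and then run the form-inequality equivalences using that $\tawt$ is closed (since $T$ is). However, your closability step contains a genuine gap.

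You claim $X\half B$ is closable ``because $X\half$ is bounded and $B$ is closed,'' writing $\overline{X\half B}\inclu X\half\overline{B}=X\half B$. This is false in general: for bounded $C$ and closed $B$ the product $CB$ need \emph{not} be closable. A rank-one $C=(\,\cdot\,,\phi)\psi$ with $\phi\notin\D B^*$ already fails, since $f\mapsto (Bf,\phi)$ is then an unbounded, hence non-closable, linear functional. Your own sentence --- ``$X\half B$ need not itself be closed, but it is certainly closable with closure contained in the closed operator $X\half B$'' --- is internally contradictory and signals exactly this difficulty. The paper instead uses the ingredient you set aside, namely $XB\inclu T$ with $T$ closed: if $f_n\to 0$ and $X\half Bf_n\to g$, then $XBf_n=Tf_n\to X\half g$, so closedness of $T$ forces $X\half g=0$; since $g\in\ranbar X\half=(\ker X\half)^\perp$ one concludes $g=0$. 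The same inclusion also yields $\D\overline{X\half B}\inclu\DT$ directly at the operator level (via $X\half\,\overline{X\half B}\inclu\overline{XB}\inclu T$), although your form-theoretic route to that domain inclusion is valid once closability has been established correctly. Apart from this one step, your plan matches the paper's proof.
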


\begin{proof}
 Let $X$ be the nonnegative operator in item (2) of \cref{theo1} and let $f,g \in \D \taw.$ Then,  \eqref{from} yields
  \begin{equation}\label{krib}
   \taw[f,g] = ( Tf, Bg \ik =(  XBf, Bg \ik
                               =(  X\half Bf, X\half Bg \ik
  \end{equation}
and therefore to prove the closability of $\taw$ is equivalent to prove the closability of the associated operator $X\half B$ to $\taw,$ by  \cite[VI, Example 1.23]{kato1980}).
  To see this, let   $(f_n)_{n \in \N}\inclu \DB$ such that $f_n \converge 0$  and   $X\half  B f_n \converge g .$    Since $X\half \in \bh,$ it follows that $ X B f_n \converge X\half g.$ On the other hand, the inclusion $XB \inclu T$ together with the fact that $T$ is closed yields that $X\half g =0.$  Since $g \in \ranbar X\half,$ one concludes that $g=0.$ Thus   $X\half B$  is closable. Consequently, $\overline{ X\half B}$ is a densely defined operator such that $ X\half \overline{X\half B} \inclu \overline{ XB} \inclu T$ and, in particular,
   $$
   \D \overline{    X\half B} \inclu \DT.
   $$
   Furthermore,  the operator $ H:=( X\half B)\adj \overline{ X\half B} \geq 0$ is     selfadjoint and  it follows from \eqref{krib} that
  $$
  \overline{\taw}[f,g]=(  \overline{X\half B} f, \overline{X\half B}g \ik = ( H\half f, H\half g \ik \quad \text{for all } f,g \in \D  \overline{\taw}.
  $$
  One  concludes that $\D \overline{\taw} = \D H\half = \D \overline{X\half B} \inclu \DT.$ \\
  %___________________________________
  %____________________________________
  %___________________________________
  \hspace*{0.3cm} To see the stated equivalences, observe first from  \eqref{from}  that for all $f \in \D \taw=  \DB \inclu \DT  $ and for a fixed $\lambda  \geq 0,$ one has
       %----------------
         \begin{equation}\label{equivalence0}
            \n T f \nkt \leq \, \lambda  \,  ( Tf, Bf )_\sK \quad \ifaf \quad \tawt[f] \leq \lambda  \, \taw[f].
         \end{equation}
        %---------------
   On the other hand, it is clear that $\tawt$ is closed, since $T$ is closed; cf. \cite[VI, Example 1.13]{kato1980}.
   Hence, item (v) of  \cite[Lemma 5.2.2]{behrndt2020boundary} gives
   %________________
   \begin{equation}\label{formsi}
     \tawt \leq \lambda  \, \taw \Rightarrow \tawt \leq \lambda  \, \overline{\taw}.
   \end{equation}
Furthermore,  $ \taw \inclu \overline{\taw}$ so again by     \cite[Lemma 5.2.2]{behrndt2020boundary}  one has
$\overline{\taw}  \leq \taw.$
This together with  \eqref{formsi} implies that
\begin{equation}\label{formsi2}
     \tawt \leq \lambda  \, \taw \ifaf \tawt \leq \lambda  \, \overline{\taw} \ifaf \tstar T \leq  \lambda \, H,
\end{equation}
see \cite[Theorem 5.2.4]{behrndt2020boundary} (or, \cite[VI, Remark 2.29]{kato1980}).
One concludes the equivalences $\rm(1)- (3)$ by a combination of \cref{theo1} with \eqref{equivalence0} and \eqref{formsi2}.
\end{proof}

\begin{cor}\label{cor21}
  Let the operators $T,B: \h \rightarrow \mathfrak{K}$ satisfy the conditions {\rm (1)} and {\rm (2)} in \cref{theo1}.
  Then the following statements hold for $0 \leq \lambda\, (= \n X\n)$:
  \begin{enumerate}
  \item[\rm(1)] $0\leq (Tf,Bf\ik \leq \, \lambda \,  \|Bf\|^2$ for all $f \in \DB$;

  \item[\rm(2)] the operator $\tstar B$ is symmetric and, moreover,
   \begin{equation}\label{gall}
   \tstar B \inclu  H \inclu  B\adj T;
   \end{equation}
   %-------------------------------------
    \item [\rm(3)] $\tstar T \leq \lambda H \leq \lambda^2 B\adj B;$
   \item [\rm(4)]  if $\tstar B$ is selfadjoint, then equalities hold in \eqref{gall} and one has
                            \begin{equation}\label{twoul}
                              \tstar T \leq  \lambda \,    \tstar B =  \lambda \,   B\adj X B \leq  \lambda^2  \, B\adj B.
                             \end{equation}
  \end{enumerate}
\end{cor}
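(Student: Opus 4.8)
The plan is to reduce all four assertions to \cref{theo2} together with the explicit formula $H = B\adj X\half\,\overline{X\half B}$ obtained there, taking throughout $\lambda = \n X\nk$. This is legitimate: for $f\in\DB$ we have $Tf = XBf$ by $XB\inclu T$, hence
\[
  \n Tf\nkt = \n XBf\nkt \le \n X\nk\,\n X\half Bf\nkt = \n X\nk\,(XBf,Bf\ik = \n X\nk\,(Tf,Bf\ik ,
\]
so condition (1) of \cref{theo1} holds with $\lambda = \n X\nk$, and therefore \cref{theo1}, \cref{theo2} and the inclusions established in their proofs all apply with this value of $\lambda$. In particular $X\half\,\overline{X\half B}\inclu T$, $\D H\half = \D\overline{\taw} = \D\overline{X\half B}\inclu\DT$, $\DB$ is a core for $\overline{\taw}$, and $\tstar T\le\lambda H$. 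Assertion (1) is then immediate from $Tf = XBf$ and $X\ge 0$: one gets $0\le(XBf,Bf\ik = (Tf,Bf\ik$ and $(XBf,Bf\ik\le\n X\nk\,\n Bf\nkt = \lambda\,\n Bf\nkt$.

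For assertion (2) I would prove the two inclusions in \eqref{gall} separately. The inclusion $H\inclu B\adj T$ is read off from the formula for $H$: if $f\in\D H\inclu\D\overline{X\half B}\inclu\DT$, then $Tf = X\half\,\overline{X\half B}f$ by $X\half\,\overline{X\half B}\inclu T$, while $f\in\D H = \D(B\adj X\half\,\overline{X\half B})$ means precisely that $X\half\,\overline{X\half B}f\in\dom B\adj$; hence $Tf\in\dom B\adj$ and $Hf = B\adj Tf$. For $\tstar B\inclu H$, fix $f\in\D(\tstar B)$, so $f\in\DB = \D\taw$ and $Bf\in\dom\tstar$. Using the symmetry of the nonnegative form $\taw$ and $Bf\in\dom\tstar$, one computes $\taw[f,g] = \overline{\taw[g,f]} = \overline{(Tg,Bf)_{\mathfrak{K}}} = (\tstar Bf,g)_\h$ for all $g\in\DB$, and hence $\overline{\taw}[f,g] = (\tstar Bf,g)_\h$ for all $g\in\DB$. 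Since $\DB$ is a core for $\overline{\taw}$ and both sides are continuous in $g$ for the form norm $g\mapsto(\n g\n^2 + \overline{\taw}[g])\half$, this identity extends to all $g\in\D\overline{\taw}$; the first representation theorem (see, e.g., \cite[VI, Theorem 2.1]{kato1980}) then places $f\in\D H$ with $Hf = \tstar Bf$, i.e.\ $\tstar B\inclu H$. Since $H = H\adj$, this inclusion also shows that $\tstar B$ is symmetric.

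For assertion (3) the inequality $\tstar T\le\lambda H$ is item (3) of \cref{theo2}. For the other one, $\D(B\adj B)\half = \DB\inclu\D H\half$ and, for $f\in\DB$, $\n(B\adj B)\half f\n^2 = \n Bf\nkt$, so
\[
  \n H\half f\nkt = \overline{\taw}[f] = \taw[f] = (XBf,Bf\ik \le \n X\nk\,\n Bf\nkt = \lambda\,\n(B\adj B)\half f\n^2 , \qquad f\in\DB ,
\]
which gives $H\le\lambda B\adj B$ and hence $\lambda H\le\lambda^2 B\adj B$; combining with $\tstar T\le\lambda H$ proves (3). (If $T=0$ then $\lambda=0$, $H=0$ and all three terms vanish.) For assertion (4), assume $\tstar B$ is selfadjoint. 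Taking adjoints in $\tstar B\inclu H$ gives $H = H\adj\inclu(\tstar B)\adj = \tstar B$, so $H = \tstar B$, and in particular $\D H = \D(\tstar B)\inclu\DB$. A short computation using $T\clo = T$ (valid since $T$ is closed) shows $B\adj T\inclu(\tstar B)\adj = H$; conversely, from $\D H\inclu\DB$ and $Tg = XBg$ on $\DB$ one obtains $B\adj T\inclu B\adj XB\inclu H$. Combined with $H\inclu B\adj T$ from (2), this forces $\tstar B = H = B\adj T = B\adj XB$, and substituting these identities into (2)--(3) yields the chain \eqref{twoul}.

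The main obstacle is the inclusion $\tstar B\inclu H$ in (2): the identity $\overline{\taw}[f,\,\cdot\,] = (\tstar Bf,\,\cdot\,)_\h$ is available a priori only on the core $\DB$, and it must be extended to all of $\D\overline{\taw}$ before the first representation theorem can be invoked --- this is precisely where the closability of $\taw$ and the "core" statement of \cref{theo2} do the work. The secondary technical point is keeping careful track of the (possibly not densely defined) factor $B\adj$ in $H = B\adj X\half\,\overline{X\half B}$, so that the domain identities in (2) and (4) are correctly justified.
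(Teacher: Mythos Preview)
Your proof is correct and follows the same overall architecture as the paper: use the identity $Tf=XBf$ on $\DB$ for (1), the explicit formula $H=B\adj X\half\,\overline{X\half B}$ from \cref{theo2} for (2), the form estimate on $\DB$ for (3), and an adjoint argument for (4).

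The one substantive difference is your proof of the inclusion $\tstar B\inclu H$ in (2). You invoke the first representation theorem: write $\overline{\taw}[f,g]=(\tstar Bf,g)_\h$ on the core $\DB$, extend by continuity to $\D\overline{\taw}$, and conclude $f\in\D H$ with $Hf=\tstar Bf$. The paper instead observes that $XB\inclu T$ implies $\tstar\inclu (XB)\adj=B\adj X$, hence the one-line chain
\[
  \tstar B \inclu B\adj XB \inclu B\adj X\half\,\overline{X\half B}=H \inclu B\adj\,\overline{XB}\inclu B\adj T,
\]
which simultaneously gives both inclusions in \eqref{gall} and the intermediate position of $B\adj XB$ needed in (4). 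This operator-inclusion route is slightly more elementary (no core/continuity extension, no appeal to the representation theorem) and delivers $B\adj XB\inclu H$ for free; your form-theoretic route is perfectly natural given the machinery of \cref{theo2}, but you then have to recover $B\adj XB\inclu H$ separately in (4), which you do correctly from the explicit formula for $H$.
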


\begin{proof}
(1) Since $XB \inclu T$ one has for all $f\in \DB$,
\begin{equation}\label{new1}
 (Tf,Bf\ik = (XBf,Bf\ik = \|X\half Bf \|_\mathfrak{K}^2 \leq \|X\half\|^2\|Bf \|_\mathfrak{K}^2,
\end{equation}
so that the inequality holds for $\lambda = \|X\|$.

(2) Under the conditions of \cref{theo1} one has  $XB \inclu T$ for $X \in \lplusk,$  and since $\overline{XB} = \overline{X\half \overline{X\half B}}$ one concludes that
 \begin{equation}\label{j1proof}
  \tstar B \inclu B\adj X B \inclu B\adj X\half \overline{X\half B} = H \inclu    B\adj  \overline{X\half \overline{X\half B}} = B\adj \overline{X B} \inclu B\adj T.
 \end{equation}
  Since $H$ is selfadjoint, $\tstar B $ is symmetric and the proof of $\eqref{gall}$ is completed.

(3) Observe from \cref{theo2} that $\DB \inclu \D H\half \inclu \DT$ and let $f \in \DB.$ Then,
$$
\n H\half f\nk^2 = \n \overline{X\half B} f \nk^2 = \n X\half B f \nk^2 \leq \n X\half \nk^2 \n Bf\nk^2,
$$
which shows that $ H \leq \lambda B\adj B$ with $\lambda = \n X \n.$
The other inequality was proved in \cref{theo2}.

(4) Assume that  $\tstar B$ is selfadjoint.  Then  $B\adj T \inclu (\tstar B)\adj= \tstar B,$
so from \eqref{j1proof} one concludes that
     \begin{equation}\label{j1}
       \tstar B=     B\adj X B= H=  B\adj T.
     \end{equation}
This proves the equalities in \eqref{gall} and by item (3) completes the proof.
\end{proof}

Inspired by \cref{cor21}, a natural question arises as to whether items (1) and (3) can also be regarded as sufficient conditions.
As a first step, item (3) will be examined in the next lemma and \cref{prop3} in a more general framework, where the nonnegative operator $H =H\adj$ is assumed to be independent from $X.$ The second step deals with item (1), in particular with the question when the   following implication holds for some $\lambda \geq 0:$
\begin{equation}\label{secondstep}
  0\leq (Tf,Bf\ik \leq \, \lambda \,   \|Bf\|^2 \   \ \Rightarrow \ \n Tf \nkt \leq \,  \lambda \,  (Tf,Bf\ik
\end{equation}
 for all $ f \in \DB \inclu \DT.$
    This question induces the study of a reversed version of Sebestyén inequality appearing in the left-hand side of \eqref{secondstep} and will be further studied in \cref{section3}.

 \begin{lem}\label{addition}
 Let $H=H\adj \geq 0$ and  $T,B: \h \rightarrow \mathfrak{K}$ be closed densely defined operators such that
 $\DB \inclu \D H\half$ and  $ H \inclu B\adj T.$ Then the following implication holds:
  %_____________
  \begin{equation}\label{eq1rem}
  \tstar T \leq \lambda H \Rightarrow \n T f \nkt  \leq\, \lambda\, (Tf,Bf \ik \quad \text{ for all } f \in \DB \inclu \D T.
  \end{equation}
\end{lem}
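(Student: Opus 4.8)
The plan is to prove the pointwise identity
\begin{equation*}
  ( Tf, Bf \ik = ( H\half f, H\half f)_{\h} \qquad \text{for every } f \in \DB,
\end{equation*}
after which \eqref{eq1rem} follows at once. Indeed, the hypothesis $\tstar T \leq \lambda H$ means, in terms of the associated quadratic forms, that $\D H\half \inclu \DT$ and $\n Tg\nkt \leq \lambda\,( H\half g, H\half g)_{\h}$ for all $g \in \D H\half$; since $\DB \inclu \D H\half$ by assumption, applying this with $g = f \in \DB$ and inserting the identity gives $\n Tf\nkt \leq \lambda\,( Tf, Bf\ik$, which is the asserted implication.

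First I would verify a polarized version of the identity on the ``good'' vectors. Fix $f \in \DB$ and take $g \in \D H$. Since $H\half$ is selfadjoint and $g \in \D H = \D(H\half H\half)$, one has $( H\half f, H\half g)_{\h} = ( f, Hg)_{\h}$; because $H \inclu B\adj T$ this equals $( f, B\adj Tg)_{\h}$, and since $f \in \DB$ while $Tg \in \D B\adj$, the adjoint relation gives $( f, B\adj Tg)_{\h} = ( Bf, Tg\ik$. Hence
\begin{equation*}
  ( H\half f, H\half g)_{\h} = ( Bf, Tg\ik, \qquad f \in \DB,\ g \in \D H.
\end{equation*}

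The crux is to widen the range of the second variable from $\D H$ to $\DB$. Here I would use the standard fact that $\D H$ is a core for the closed form $u \mapsto ( H\half u, H\half u)_{\h}$ of the nonnegative selfadjoint operator $H$ (see \cite{kato1980}): given $g \in \DB \inclu \D H\half$, choose $g_n \in \D H$ with $g_n \converge g$ and $H\half g_n \converge H\half g$ in $\h$. The hypothesis $\tstar T \leq \lambda H$ then gives $\n T(g_n-g)\nkt \leq \lambda\,( H\half(g_n-g), H\half(g_n-g))_{\h}$, whose right-hand side tends to $0$, so $Tg_n \converge Tg$ in $\mathfrak{K}$. Letting $n \to \infty$ in the last display, with $g_n$ in place of $g$, yields $( H\half f, H\half g)_{\h} = ( Bf, Tg\ik$ for all $f,g \in \DB$; taking $g = f$ gives $( H\half f, H\half f)_{\h} = ( Bf, Tf\ik$. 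Since the left-hand side is real and nonnegative, so is $( Bf, Tf\ik$, hence $( Tf, Bf\ik = \overline{( Bf, Tf\ik} = ( H\half f, H\half f)_{\h}$, which is the identity announced above.

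The only genuine obstacle is the discrepancy between $\DB$ and $\D H$: one only knows $\DB \inclu \D H\half$, so the direct computation $( Tf, Bf\ik = ( B\adj Tf, f)_{\h} = ( Hf, f)_{\h} = ( H\half f, H\half f)_{\h}$ is licit only when $f \in \D H$ with $Tf \in \D B\adj$, and need not apply to a general $f \in \DB$. Bridging this gap is precisely the role of the core property of $\D H$ together with the $H\half$-relative boundedness of $T$ encoded in $\tstar T \leq \lambda H$; the remaining manipulations are routine.
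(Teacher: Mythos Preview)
Your proof is correct and follows essentially the same route as the paper: both arguments establish the identity $(Tf,Bf)_{\sK}=\|H^{\half}f\|^2$ for $f\in\DB$ by exploiting that $\D H$ is a core for $H^{\half}$, using $H\inclu B^*T$ to compute on $\D H$, and invoking $T^*T\le\lambda H$ to transfer $H^{\half}$-convergence to $T$-convergence. The only cosmetic difference is that the paper approximates the first argument of $(T\cdot,B\cdot)$ while you fix $f$ and approximate the second variable via a polarized identity before setting $g=f$; your extra line handling the complex conjugate is a nice touch but not strictly needed once the limit is identified as $\|H^{\half}f\|^2$.
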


\begin{proof}
  Assume that
  \begin{equation}\label{hey1}
    \tstar T \leq \lambda \, H
  \end{equation}
  and let $f \in \DB \inclu \D H\half.$ Then, $f \in \DT$ and since $\D H$ is a core for $H\half,$ there exists $(f_n)_{n \in \N} \inclu \D H \inclu \D B\adj T $ such that $f_n \converge f$ and $H\half f_n \converge H\half f$. By \eqref{hey1} this implies  that   $\n T (f_n - f)\nk \converge 0$ and hence $T f_n \converge Tf.$ Thus
  \begin{align*}
    (Tf,Bf \ik & =  \underset{n \to + \infty}{lim} ( T f_n, B f \ik =  \underset{n \to + \infty}{lim} ( B\adj T f_n, f \ik
            =\underset{n \to + \infty}{lim} (H f_n,    f \ik\\
           & = \underset{n \to + \infty}{lim} (H\half f_n,  H\half  f \ik
             =  ( H\half f,  H\half f \ik = \n H\half f \nkt.
  \end{align*}
  Combining this with \eqref{hey1} completes the argument.
\end{proof}

The next proposition gives some further operator theoretic criteria which are equivalent to the  conditions in \cref{theo1}. The proof is directly obtained from a combination of \cref{theo1},  \cref{cor21} and \cref{addition}.

\begin{Prop}\label{prop3}
Let  $T,B: \h \rightarrow \mathfrak{K}$ be closed densely defined operators. Then, the following statements are equivalent:
   \begin{enumerate}[\rm(1)]%\def\labelenumi{\rm(\roman{enumi})}
   \item $\n T f \nkt  \leq \lambda\, (Tf,Bf \ik $ for all $ f \in \DB \inclu \D T;$
   \item $\n T f \nkt \leq \lambda\, (Tf,Bf\ik \leq \, \lambda^2 \,  \|Bf\nkt$ for all $f \in \DB \inclu \DT$;
   \item $T^*T \leq \lambda H \leq  \lambda^2 B\adj B $ for some $\lambda \geq 0$ and some $0 \leq H= H\adj \inclu B\adj T;$
   \item $T^*T \leq \lambda H  $ for some $\lambda \geq 0$ and $0 \leq H= H\adj \inclu B\adj T$ with  $\DB \inclu \D H\half.$
   \end{enumerate}
   In particular, if $B\adj T$ is selfadjoint then $H = B\adj T.$
\end{Prop}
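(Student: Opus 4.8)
The proof is a fairly direct assembly of \cref{theo1}, \cref{theo2}, \cref{cor21} and \cref{addition}: the plan is to establish $(1)\Leftrightarrow(2)$ separately and then close the loop $(1)\Rightarrow(3)\Rightarrow(4)\Rightarrow(1)$. The implication $(2)\Rightarrow(1)$ is trivial, since one just discards the right-most inequality. For $(1)\Rightarrow(2)$, note that statement (1) is verbatim condition (1) of \cref{theo1}; hence \cref{theo1} produces $X\in\lplusk$ with $\n X\nk\le\lambda$ and $XB\inclu T$, so the standing hypotheses of \cref{cor21} are in force. Item (1) of \cref{cor21} then gives $0\le(Tf,Bf\ik\le\lambda\,\n Bf\nkt$ for every $f\in\DB$, and chaining this (after multiplying by $\lambda$) with (1) yields (2).

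For $(1)\Rightarrow(3)$, apply \cref{theo1} to obtain $X$ as above and then \cref{theo2}, which provides the nonnegative selfadjoint operator $H=B\adj X\half\overline{X\half B}$ with $\D H\half\inclu\DT$. By \cref{cor21}(2) this $H$ satisfies $\tstar B\inclu H\inclu B\adj T$, so in particular $0\le H=H\adj\inclu B\adj T$, and by \cref{cor21}(3) it also satisfies $\tstar T\le\lambda H\le\lambda^2 B\adj B$; thus (3) holds with this very $H$. For $(3)\Rightarrow(4)$ one keeps the same $H$ and the same $\lambda$ and only has to verify the form-domain inclusion $\DB\inclu\D H\half$; discarding the inequality $H\le\lambda^2 B\adj B$ then gives (4). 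The point is the standard fact that for a closed operator $B$ one has $\D(B\adj B)\half=\DB$ with $\n(B\adj B)\half f\nk=\n Bf\nk$: the ordering $H\le\lambda^2 B\adj B$ of nonnegative selfadjoint operators means precisely that $\D(\lambda^2 B\adj B)\half\inclu\D H\half$ together with the corresponding norm estimate, and $\D(\lambda^2 B\adj B)\half=\DB$, so $\DB\inclu\D H\half$ follows.

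Finally, $(4)\Rightarrow(1)$ is exactly \cref{addition}: its assumptions $0\le H=H\adj$, $H\inclu B\adj T$ and $\DB\inclu\D H\half$ are in force, and $\tstar T\le\lambda H$ is its premise, so the conclusion $\n Tf\nkt\le\lambda\,(Tf,Bf\ik$ holds for all $f\in\DB\inclu\DT$ --- the inclusion $\DB\inclu\DT$ being automatic from $\DB\inclu\D H\half\inclu\D(\tstar T)\half=\DT$. For the concluding remark, if $B\adj T$ is selfadjoint then taking adjoints in $H\inclu B\adj T$ yields $B\adj T=(B\adj T)\adj\inclu H\adj=H$, which together with $H\inclu B\adj T$ forces $H=B\adj T$, exactly as in \cref{cor21}(4).

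I do not expect a genuine obstacle, since each step reduces to an already proven statement; the only places demanding care are the bookkeeping of form domains in $(3)\Rightarrow(4)$ and the uniform handling of the constant $\lambda$ (one uses $\n X\nk\le\lambda$ from \cref{theo1} so that the same $\lambda$ works throughout). I would also check at the outset that the roles of $\lambda$ in the ``for some $\lambda\ge0$'' quantifiers are consistent across (1)--(4), which they are.
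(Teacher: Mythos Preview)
Your proof is correct and follows precisely the route the paper indicates: the paper's own proof consists of the single sentence ``The proof is directly obtained from a combination of \cref{theo1}, \cref{cor21} and \cref{addition},'' and your argument is exactly this combination spelled out, with the loop $(1)\Leftrightarrow(2)$ and $(1)\Rightarrow(3)\Rightarrow(4)\Rightarrow(1)$ handled via the specific $H$ of \cref{theo2} and the form-domain bookkeeping made explicit. Nothing is missing or different in substance.
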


\cref{prop3} is, in fact,  a useful tool to cover   Sebestyén theorem  in the general case of unbounded operators, as described in the next corollary, which is analogous to Proposition 2.10 and Corollary 2.11 in \cite{papertwo}.

\begin{cor}
Let  $T,B: \h \rightarrow \mathfrak{K}$ be closed densely defined operators. Then, the following statements are equivalent:
   \begin{enumerate}[\rm(1)]
   \item $T = XB$ has a solution $X \in \lplusk;$
     \item  $ \n T f \nkt  \leq \lambda (Tf,Bf \ik$   for all $ f \in \DB= \DT;$
     \item $\tstar T \leq \lambda B\adj T  \leq \lambda^2 B\adj B$ for some $\lambda \geq 0$ and $ \DT  \inclu \DB $;
     \item $ \tstar T \leq \lambda B\adj T   $ for some $\lambda \geq 0$ and $\DT \inclu \DB \inclu \D \halfbt.$
   \end{enumerate}
\end{cor}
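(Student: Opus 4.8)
The plan is to derive the corollary as the common specialization of \cref{theo1}, \cref{theo2}, \cref{cor21} and \cref{addition} to the situation where, in addition, $\DT\inclu\DB$; the one genuinely new point is that under this extra hypothesis the abstract representing operator $H$ produced by \cref{theo2} coincides with $B\adj T$, which thereby turns out to be selfadjoint and nonnegative (as the symbol $\halfbt$ occurring in (4) presupposes). I would run the cycle $(1)\Leftrightarrow(2)\Rightarrow(3)\Rightarrow(4)\Rightarrow(2)$. For $(1)\Leftrightarrow(2)$, apply \cref{theo1}: if $T=XB$ with $X\in\lplusk$ then $XB\inclu T$, so item (2) of \cref{theo1} holds, hence item (1) of \cref{theo1} gives the inequality in (2), while $\DB=\D(XB)=\DT$; conversely, (2) entails item (1) of \cref{theo1} together with $\DT\inclu\DB$, so \cref{theo1} provides $X\in\lplusk$ with $XB\inclu T$, and then $\D(XB)=\DB=\DT$ forces $XB=T$.

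For $(2)\Rightarrow(3)$, assume (2); then the equivalent conditions of \cref{theo1} hold with $\DB=\DT$, so by \cref{theo2} the operator $H=B\adj X\half\overline{X\half B}$ is selfadjoint and nonnegative, $\tstar T\leq\lambda H$, and $\DB\inclu\D H\half=\D\overline{X\half B}\inclu\DT=\DB$, whence $\D H\half=\DB=\DT$; moreover $\tstar B\inclu H\inclu B\adj T$ and $\tstar T\leq\lambda H\leq\lambda^2 B\adj B$ by \cref{cor21}. The key step is to check $H=B\adj T$: one has $\D H\inclu\D H\half=\DB$, and for $f\in\DB$ the equalities $\overline{X\half B}f=X\half Bf$, $(X\half B)\adj=B\adj X\half$ and $XB=T$ show that $f\in\D H$ precisely when $Tf=XBf\in\D B\adj$, in which case $Hf=B\adj X\half(X\half Bf)=B\adj Tf$; hence $\D H=\{f\in\DT:Tf\in\D B\adj\}=\D(B\adj T)$ and $H=B\adj T$. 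In particular $B\adj T$ is selfadjoint and nonnegative, and substituting $H=B\adj T$ into the preceding inequalities gives (3).

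The remaining implications $(3)\Rightarrow(4)\Rightarrow(2)$ are domain bookkeeping together with \cref{addition}. Given (3), the inequality $\lambda B\adj T\leq\lambda^2 B\adj B$ of nonnegative selfadjoint operators (the case $\lambda=0$ forces $T=0$ and is trivial) yields $\DB=\D(B\adj B)\half\inclu\D\halfbt$, so (4) follows. Given (4), the form inequality $\tstar T\leq\lambda B\adj T$ yields $\D\halfbt\inclu\D(\tstar T)\half=\DT$, so $\DB\inclu\D\halfbt\inclu\DT$ and, with the given $\DT\inclu\DB$, $\DT=\DB$; then \cref{addition}, applied with $H=B\adj T$ (its hypotheses $\DB\inclu\D H\half$ and $H\inclu B\adj T$ being immediate), converts $\tstar T\leq\lambda B\adj T$ into the inequality of (2), which with $\DB=\DT$ is (2). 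I expect the main obstacle to be the identification $H=B\adj T$ inside $(2)\Rightarrow(3)$ — equivalently, the fact that $B\adj T$ is already selfadjoint whenever $\DT\inclu\DB$; for it one must use $\D\overline{X\half B}\inclu\DT$ from \cref{theo2} and $XB=T$ to see that $\D H$ is not strictly smaller than $\D(B\adj T)$. Everything else rests on the standard facts $\D(\tstar T)\half=\DT$, $\D(B\adj B)\half=\DB$, and the equivalence of an operator inequality $A\leq C$ between nonnegative selfadjoint operators with the pair $\D C\half\inclu\D A\half$ and $\|A\half f\|\leq\|C\half f\|$ on $\D C\half$ — all already used, via \cite{kato1980,behrndt2020boundary}, in the proof of \cref{theo2}.
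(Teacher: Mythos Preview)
Your proof is correct and follows essentially the route the paper intends. The paper itself gives no explicit proof of this corollary, merely noting that it follows from \cref{prop3} (together with references to \cite{papertwo}); your argument is precisely the unpacking of that claim, using the same ingredients (\cref{theo1}, \cref{theo2}, \cref{cor21}, \cref{addition}) that underlie \cref{prop3}. The one step you carry out in full that the paper leaves implicit is the identification $H=B\adj T$ when $\DB=\DT$: you obtain it directly from the formula $H=(X\half B)\adj\overline{X\half B}$ of \cref{theo2} together with $\D\overline{X\half B}=\DB$ and $XB=T$, whereas the paper would simply quote the last line of \cref{prop3} (``if $B\adj T$ is selfadjoint then $H=B\adj T$'') once selfadjointness of $B\adj T$ is known. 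Your computation actually establishes that selfadjointness, so nothing is circular.
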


\section{Characterization of the reversed inequality} \label{section3}

The second step involving \eqref{secondstep} is now considered. Analogously to \cref{theo1},
the following result characterizes a reversed inequality.

\begin{Theo}\label{theo3}
  Let $(\h, (, )_{\h})$ and $(\mathfrak{K}, (, )_{\mathfrak{K}})$ be complex Hilbert spaces and $T,B:\h \rightarrow \mathfrak{K}$ be closed densely defined operators. Then, the following statements are equivalent for some $m > 0:$
   \begin{enumerate}
    \item[\rm(1b)] $ \|Tf\|^2_\sK \geq m(Tf,Bf)_\mathfrak{K} \geq 0$  for all $f \in \dom T \inclu \dom B;$
    \item[\rm(2b)] there exists $Y  \in \lplusk$ such that $YT \inclu P_T B,$
         where $P_T$ stands for the orthogonal projection onto $\ranbar T$.
  \end{enumerate}
\end{Theo}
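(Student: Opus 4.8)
The plan is to adapt the proof of \cref{theo1}, now interchanging the roles of $T$ and $B$ and compressing everything onto $\ranbar T$. For $(1b)\Rightarrow(2b)$, note first that $(1b)$ makes $(Tf,Bf\ik$ real and nonnegative, so the form $\taw[f,g]:=(Tf,Bg\ik$ on $\DT\inclu\DB$ is Hermitian (a sesquilinear form on a complex space whose diagonal is real is symmetric) and nonnegative; in particular Cauchy--Schwarz $|\taw[f,g]|^2\le\taw[f,f]\,\taw[g,g]$ is available, and $(1b)$ reads precisely $\taw\le m^{-1}\tawt$. Completing the quotient of $\DT$ by the radical $\{h\in\DT:\taw[h,h]=0\}$ of $\taw$ produces a Hilbert space $\mathfrak K_{T,B}$ carrying $\la\wt f,\wt g\ra_{\mathfrak K_{T,B}}=(Tf,Bg\ik$, just as $\kb$ was built in \cref{theo1}.

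Next I would introduce $V:\mathfrak K_{T,B}\to\sK$, $V\wt f:=P_TBf$. Well-definedness uses the symmetry of $\taw$: if $\taw[h,h]=0$ then $\taw[h,g]=0$ for all $g\in\DT$ by Cauchy--Schwarz, hence $(Tg,Bh\ik=\overline{\taw[h,g]}=0$ for all $g$, so $Bh\perp\ran T$ and $P_TBh=0$; thus $V\wt f$ depends only on the class of $f$. Boundedness with $\n V\n\le m^{-1/2}$ follows, using density of $\ran T$ in $\ranbar T$ and Cauchy--Schwarz for $\taw$, from
\[
 \n P_TBf\n_\sK=\sup_{g\in\DT,\ Tg\neq0}\frac{|(Bf,Tg\ik|}{\n Tg\n_\sK}
 =\sup_{g\in\DT,\ Tg\neq0}\frac{|\taw[g,f]|}{\n Tg\n_\sK}
 \le m^{-1/2}\,\taw[f,f]^{1/2}=m^{-1/2}\n\wt f\n_{\mathfrak K_{T,B}}.
\]
A short inner-product computation, mirroring the verification that $V^*Bf=\wt f$ in \cref{theo1}, then gives $V^*(Tg)=\wt g$ for every $g\in\DT$. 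Put $Y:=VV^*$: then $Y\in\lplusk$, $\n Y\n=\n V\n^2\le m^{-1}$, $\ran Y\inclu\ran V\inclu\ranbar T$, and $YTg=V\wt g=P_TBg$ for all $g\in\DT$; since $\dom(YT)=\DT\inclu\DB=\dom(P_TB)$, this is exactly $YT\inclu P_TB$, so $(2b)$ holds (with the extra information $\ran Y\inclu\ranbar T$, parallel to the last line of \cref{theo1}).

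For the converse $(2b)\Rightarrow(1b)$, the inclusion $YT\inclu P_TB$ already forces $\DT\inclu\DB$, and for $f\in\DT$,
\[
 (Tf,Bf\ik=(Tf,P_TBf\ik=(Tf,YTf\ik=\n Y\half Tf\n_\sK^2\ge0,
\]
which is at most $\n Y\n\,\n Tf\n_\sK^2$; hence $(1b)$ holds with $m=\n Y\n^{-1}$ when $Y\neq0$, and with any $m>0$ when $Y=0$ (in which case $P_TBf=0$, so $(Tf,Bf\ik=0$, for all $f\in\DT$).

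I expect the one genuine obstacle to be the construction and norm estimate of $V$: one must be sure that $\taw$ is a bona fide semi-inner product --- which is exactly where the real-diagonal$\,\Rightarrow\,$Hermitian fact enters --- and that its isotropic vectors are precisely the $h$ with $Bh\perp\ran T$, so that $V$ descends to the quotient and extends to its completion; the bound $\n V\n\le m^{-1/2}$ is then only a rereading of $(1b)$ as $\taw\le m^{-1}\tawt$ combined with Cauchy--Schwarz. A leaner variant skips the completion entirely: the nonnegative form $(Tf,Bg\ik$ on the dense subspace $\ran T\inclu\ranbar T$ is bounded by $m^{-1}$, hence represented by some $Z=Z\adj$ with $0\le Z\le m^{-1}$ on $\ranbar T$ satisfying $ZTf=P_TBf$, and $Y:=Z\oplus0$ on $\sK=\ranbar T\oplus(\ranbar T)\orth$ does the job. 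Everything else is a routine transcription of the bounded/closed-operator manipulations already carried out in \cref{theo1} and \cref{cor21}.
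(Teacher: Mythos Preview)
Your proof is correct and follows essentially the paper's strategy: both build the auxiliary Hilbert space from the nonnegative form $(T\cdot,B\cdot)$ on $\DT$ and realise $Y$ as $VV^*$ (the paper instead defines $Z:Tf\mapsto\wt f$ going the opposite direction and takes $Y=Z^*Z$, so that well-definedness and the bound $\|Z\|\le m^{-1/2}$ are read off directly from (1b) without invoking Cauchy--Schwarz). Your ``leaner variant,'' representing the bounded nonnegative form on $\ran T$ directly by a $Z=Z^*\ge 0$ on $\cran T$, is a valid shortcut not taken in the paper.
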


\begin{proof}
Consider the sesquilinear form $(Tf,Bg)$, $f,g \in \dom T$.
By assumption the quadratic form $(Tf,Bf)$ is nonnegative for all $f\in\dom T$.
Therefore, it satisfies the Cauchy-Schwarz inequality, i.e.,
\begin{equation}\label{new01}
 |(Tg,Bf)_\sK| \leq (Tf,Bf)_\sK^{\half}(Tg,Bg)_\sK^{\half} \quad \textrm{for all } f,g \in \dom T.
\end{equation}
Hence, if $(Tf,Bf)_\sK =0$ for some $f\in \dom T$ then by \eqref{new01}
$(Tg,Bf)_\sK=0$ holds for all $g\in \dom T$, i.e., $Bf \in (\ran T)^\perp =\ker T^*$.
The converse is also true and, therefore, for $f\in \dom T$ one has $(Tf,Bf)_\sK=0$
if and only if $f\in \ker T^*B$.

Next observe that $P_TB(\dom T)=\{0\}$ if and only if $(Tf,Bf)=0$ for all $f\in \dom T$,
i.e., $(Tf,Bg)$ is the $0$-form on $\dom T$; cf. \eqref{new01}.
In this case (1b) holds trivially and $Y=0$ satisfies the inclusion in (2b), and the equivalence
of (1b) and (2b) holds in this case.

$\rm{(1b)} \Rightarrow \rm{(2b)}$
Assume that $P_TB(\dom T)\neq \{0\}$ or, equivalently, that for some $f \in \dom T$ one has $(Tf,Bf)_\sK>0$.
In particular, in this case also $T\neq 0$.

Now introduce the Hilbert space $(\kbt,\langle .,.\rangle_{\kbt})$ by completing
the factor space $[\dom T/(\dom T\cap \ker T^*B)]$ with respect to the inner product
  \begin{equation}\label{innerprohi}
  \langle \widetilde{f},\widetilde{g} \rangle_{\kbt}:= (T f, Bg)_\sK , \quad f,g \in \dom T,
  \end{equation}
where  $\widetilde{f},\widetilde{g}$ represent the corresponding equivalence classes.

Next, let the mapping $Z: \ran T  \rightarrow \kbt$ be defined by
\[
  Z Tf =\widetilde{f} \quad \text{for all } f \in \dom T.
\]
Then \eqref{innerprohi} shows that $Z$ is a well-defined linear operator which is bounded by $1/\sqrt{m}$,
since the assumption in (1b) implies that
\[
 \|Z Tf\|_{\kbt}^2 = \langle \widetilde{f}, \widetilde{f} \rangle_{\kbt} = (Tf, Bf)_\sK \leq \tfrac{1}{m} \| Tf\|_\sK^2
 \quad \textrm{for all } f \in \dom T.
\]
By continuity $Z$ can be extended to a bounded operator from $\cran T$ to $\kbt$
and with a zero continuation to $(\ran T)^\perp$ one gets a bounded operator $\sK\to\kbt$,
which is still denoted by $Z$.
It is claimed that
\[
   Z^*  \widetilde{f} = P_T Bf  \quad \text{for all } f \in \dom T.
\]
To see this, let $h=Tt$, $t\in \dom T$ and $f \in \dom T.$ Then,
\begin{align*}
 (h , Z^*\widetilde{f} \,)_\sK = (Z Tt, \widetilde{f})_{\kbt} = (\widetilde{t} , \widetilde{f})_{\kbt} = (Tt, Bf)_\sK = (h, Bf)_\sK,
\end{align*}
which proves that  $ Z^*\widetilde{f} - Bf \perp \ran T$. By construction, $(\ran T)^\perp \inclu \ker Z$ and hence $\ran Z^*\inclu \cran T$.
Therefore, $Z^*\widetilde{f}=P_TZ^*\widetilde{f}=P_T Bf$ as claimed.
Thus, for $Y:=     Z\adj Z  \in \lplusk$ one has $ \n Y \n \leq \tfrac{1}{m}$ and
\begin{equation}\label{star}
   Y Tf =  Z\adj Z Tf =  Z\adj \widetilde{f}= P_T Bf \quad \text{for all } f \in \dom T,
\end{equation}
which means that $YT \inclu P_T B.$

\hspace*{0.3cm} $\rm{(2b)} \Rightarrow \rm{(1b)}$
By the first part of the proof, the statement holds trivially if $Y=0$.
Now assume that $Y\neq 0$, so that $M:=\|Y\|>0$.
Then by assumption $YT\inclu P_T B$ and hence for all $f \in \DT$ one has
\[
 (Tf,Bf)_\sK = (Tf,P_T Bf)_\sK =(Tf,YTf)_\sK = \|Y^{\half}Tf\|^2_\sK \leq M\,\|Tf\|^2_\sK,
\]
which completes the proof of (1b) with $m=1/M>0$.
\end{proof}

The proof shows that one can take $M=1/m>0$ in Theorem \ref{theo3} when the form $(T\cdot,B\cdot)$ is nontrivial.

\begin{cor}\label{corfinal}
The inequality (1b) in Theorem \ref{theo3} implies also the following inequality:
\begin{equation}\label{eq2c}
  (Tf,Bf)_\mathfrak{K} \geq m \,\|P_TBf\|^2_\sK \quad \textrm{for all } f \in \dom T.
\end{equation}
If $P_T B$ is closable then also the form $(T\cdot,B\cdot)$ on the domain $\dom T$ is closable,
and this holds, in particular, if $\ran B \subseteq \cran T$, in which case $P_T B=B$.
\end{cor}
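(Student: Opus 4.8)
The plan is to deduce everything from the equivalence of (1b) and (2b) in \cref{theo3}, together with the remark following its proof. First I would dispose of the trivial case: if $P_TB(\dom T)=\{0\}$ then the form $(T\cdot,B\cdot)$ on $\dom T$ is identically zero, \eqref{eq2c} holds with both sides vanishing, and closability is immediate. So from now on assume the form is nontrivial and fix $Y\in\lplusk$ with $YT\inclu P_TB$ and $\n Y\n\le 1/m$ (indeed $=1/m$, by the remark after \cref{theo3}), as produced in the proof of \cref{theo3}.

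For \eqref{eq2c} I would argue pointwise. Fix $f\in\dom T$. Since $Tf\in\ran T$, $P_T^*=P_T$, and $P_TBf=YTf$ (because $YT\inclu P_TB$ and $f\in\dom T\inclu\dom B$), we get $(Tf,Bf)_\sK=(Tf,P_TBf)_\sK=(Tf,YTf)_\sK=\n Y\half Tf\nkt\ge 0$. Then $\n P_TBf\nkt=\n YTf\nkt=\n Y\half(Y\half Tf)\nkt\le\n Y\n\,\n Y\half Tf\nkt=\tfrac1m\,(Tf,Bf)_\sK$, which rearranges to \eqref{eq2c}.

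For the closability assertion, the key point I would isolate is that on $\dom T$ the form $(T\cdot,B\cdot)$ coincides with the form $f,g\mapsto(Sf,Sg)_\sK$ of the operator $S:=Y\half T$ (with $\dom S=\dom T$), since $(Tf,Bg)_\sK=(Tf,P_TBg)_\sK=(Tf,YTg)_\sK=(Y\half Tf,Y\half Tg)_\sK$ using selfadjointness of $Y\half$. By \cite[VI, Example 1.23]{kato1980} the form $(T\cdot,B\cdot)$ is then closable if and only if $S$ is closable, and I would verify the latter exactly as in the closability step of \cref{theo2}: given $(f_n)_{n\in\N}\inclu\dom T$ with $f_n\converge 0$ and $Y\half Tf_n\converge h$, apply the bounded operator $Y\half$ to get $P_TBf_n=YTf_n\converge Y\half h$; since $P_TB$ is closable and $f_n\converge 0$, this forces $Y\half h=0$, whence $\n h\nkt=\lim_n(Y\half Tf_n,h)_\sK=\lim_n(Tf_n,Y\half h)_\sK=0$, so $h=0$. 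Finally, if $\ran B\inclu\cran T$ then $P_TBg=Bg$ for every $g\in\dom B$, so $P_TB=B$, which is closed—hence closable—by the standing hypothesis, and the previous argument applies, with $P_TB=B$.

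The whole argument is short once \cref{theo3} is in hand, so there is no serious obstacle; the one spot needing a little care is the final step of the closability proof, since closability of $P_TB$ only delivers $Y\half h=0$ and not $h=0$ directly. The computation $\n h\nkt=\lim_n(Y\half Tf_n,h)_\sK=\lim_n(Tf_n,Y\half h)_\sK=0$ closes this gap; alternatively one may note that $h\in\overline{\ran Y\half}=(\ker Y\half)\orth$ while $Y\half h=0$ puts $h\in\ker Y\half$, again forcing $h=0$.
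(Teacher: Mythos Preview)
Your proof is correct and follows essentially the same route as the paper: the inequality \eqref{eq2c} is derived from $YT\inclu P_TB$ via $\|P_TBf\|^2=\|YTf\|^2\le\|Y\|\,\|Y^{\half}Tf\|^2=\tfrac{1}{m}(Tf,Bf)_\sK$, and the closability of the form is reduced to that of $Y^{\half}T$ exactly as in \cref{theo2} (the paper merely refers back to that argument, whereas you spell it out, including the extra line needed to pass from $Y^{\half}h=0$ to $h=0$).
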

\begin{proof}
By Theorem~\ref{theo3} $YT\inclu P_T B$, where $\|Y\|\leq 1/m$; cf. \eqref{star}.
Therefore, one obtains for all $f\in \dom T$,
\[
 \|P_T Bf\|^2_\sK = \|YTf\|^2_\sK \leq \frac{1}{m}\, \|Y^{\half} Tf\|^2_\sK = \frac{1}{m}\, (Tf,YTf)_\sK = \frac{1}{m}\, (Tf,Bf)_\sK,
\]
which gives the inequality \eqref{eq2c}. Notice also that if $(Tf,Bf)=0$ for all $f\in \dom T$ then,
equivalently, $P_TB(\dom T)=\{0\}$ (cf. the proof of Theorem~\ref{theo3}),
so that \eqref{eq2c} remains true also in this case.

The second statement can be proved in the same way as the closability was proven in \cref{theo2}.
By assumption in \cref{theo3} $B$ is closed and hence the last statement is clear, since
$\ran B \subseteq \cran T$ holds precisely when $P_T B=B$.
\end{proof}

\begin{rem}
Using \cref{theo1} and  switching the roles of $T$ and $B$ in \cref{prop3} leads to the following equivalent statements
(with $m=1/\lambda$):
\begin{enumerate}[\rm(1)]
\item $ \tstar T \geq m \, H \geq m^2\,B\adj B $ for some  $ 0 \leq H=H\adj\inclu T\adj B;$
   \item  $\|Tf\nkt \geq  m \, (Bf,Tf\ik = m\, (Tf, Bf \ik \geq  \, m^2 \, \n B f \nkt $ for all $f \in \DT \inclu \DB$;
   \item  $Y_2T \inclu B$ for some  $Y_2 \in \lplusk$ such that $\n Y_2 \nk \leq \lambda$ and $\ranbar Y_2 \inclu \ranbar B.$
\end{enumerate}
The inclusion in {\rm (3)} yields $Y_3  T \inclu  P_{\ranbarT} B$ with $Y_3:= P_{\ranbarT} Y_2 P_{\ranbarT} \in \lplusk,$ which is equivalent by  \cref{theo3} and \cref{corfinal} to
\begin{equation}\label{tih3}
  \|Tf\nkt \geq  m \, (Tf,Bf)_\mathfrak{K} \geq m^2 \, \|P_TBf\|^2_\sK \quad \textrm{for all } f \in \dom T.
\end{equation}
   \end{rem}

Another approach to the inequalities characterized in Theorem~\ref{theo1} and Theorem~\ref{theo2} is studied in \cite{papertwo}
under the assumption that the operator $T^*B$ is selfadjoint; cf. \cite[Theorem 2.7]{papertwo}.
The functional analytic approach in the present paper leads to the factorization of the operator $T$ by means of the  $B$
(instead of the core $B_0=B\upharpoonright\dom T^*B$ of $B$) with a nonnegative bounded operator $X$ in Theorem \ref{theo1}
and to an analogous factorization of the operator $B$ in Theorem~\ref{theo3}.
The approach here is based on the nonnegativity of the form $(T\cdot,B\cdot)$, which is defined on a larger domain $\dom T$
(or $\dom B$) than the domain of $T^*B$ in the case when $T^*B$ (or $B^*T$) is assumed to be selfadjoint. The nonnegative
factors are then obtained by constructing new suitable Hilbert spaces from the nonnegative form $(T\cdot,B\cdot)$ in each case.

%%%%%%%%%%%%%%%%%%%%%%%%%%%%%%%%%%%%%%%%%%%%%%%%%%%%%%
%                    Bibliothèque
%%%%%%%%%%%%%%%%%%%%%%%%%%%%%%%%%%%%%%%%%%%%%%%%%%%%%%

%\bibliographystyle{aabrv}

\bibliographystyle{siam}
%\bibliographystyle{plainnat}
%\bibliographystyle{halpha}
%\bibliographystyle{natbib}
%\bibliographystyle{abbrvnat}
%\bibliography{library}

%%%%%%%%%%%%%%%%%%%%%%%%%%%%%%%%%%%%%%%%%%%%%%%%%%%%%%%%%%%%%%%%%%%%%

\end{document}